\documentclass{article}
\usepackage[utf8]{inputenc}
\usepackage[a4paper,margin=1in]{geometry}

\usepackage{lineno}
\usepackage{amsmath}
\usepackage{amssymb}
\usepackage{amsthm}
\usepackage{graphicx}
\usepackage{bbm} 
\usepackage{subfigure}

\newtheorem{theorem}{Theorem}

\newtheorem{lemma}{Lemma}

\newcommand{\Prob}[1]{\mathbbm{P}\left(#1\right)}
\newcommand{\rbra}[1]{\left( #1 \right)}
\newcommand{\E}[1]{\mathbbm{E}\left(#1\right)}
\newcommand{\Var}[1]{\text{Var}\left(#1\right)}

\def\P{ {\mathbbm{P} }}
\def\S{ {S}}

\title{The variance of the average depth of a pure birth process converges to 7}

\author{Ken R. Duffy \thanks{Hamilton Institute, Maynooth University, Maynooth, Ireland} \and Gianfelice Meli \footnotemark[1] \and Seva Shneer \thanks{School of Mathematical and Computer Sciences, Heriot-Watt University, Edinburgh, UK} }

\date{}

\begin{document}

\maketitle

\begin{abstract}
If trees are constructed from a pure birth process and one defines the
depth of a leaf to be the number of edges to its root, it is known that
the variance in the depth of a randomly selected leaf of a randomly
selected tree grows linearly in time. In this letter, we instead
consider the variance of the average depth of leaves
within each individual tree, establishing that, in contrast, it
converges to a constant, $7$. This result indicates that while the
variance in leaf depths amongst the ensemble of pure birth processes
undergoes large fluctuations, the average depth across individual
trees is much more consistent.
\end{abstract}



\section{Introduction}

Continuous time branching processes form fundamental building blocks
of many stochastic models (e.g. \cite{Kimmel02}) and much is known
about many statistics associated with them. A pure birth
process~\cite{resnick2013adventures} is the simplest continuous
time branching process. It describes the growth of a directed tree
that starts at time $0$ with a root, which is the first leaf. Each
leaf extends the tree by creating two new leaves after an exponentially
distributed time with mean $1/\lambda$, independently of everything
else. Pure birth processes appear as a fundamental model of study
in a large number of applications from data structures in computer
science to likelihood methods in phylogenetics to the study of
random walkers on random graphs, and are well studied.

Of interest to us here is a measure of tree depth, the distance
from root to leaves. If one conditions on the number of nodes, much
is known. For example, Pittel~\cite{pittel1984growing} linked prior
results regarding binary search
trees~\cite{robson1979height,flajolet1980exploring,devroye1986note} 
to continuous time Markovian branching processes, establishing scaling
properties of the depth of the both the shortest and longest leaf. Further
extensions of those results have since been found
~\cite{pittel1994note,biggins1997note}. Without conditioning on the
number of nodes in the tree, relatively little appears in the
literature. For a pure birth process, it is known that the mean
depth of a randomly chosen leaf in a randomly selected tree grows
as $2\lambda t$ with variance $2\lambda t$~\cite{samuels1971distribution}.
However, for many applications, particularly in the life sciences
e.g \cite{perie14,marchingo2016}, one is interested in the properties
of individual growing trees. Denoting the number of leaves in a
random tree at time $t$ by $Z(t)$ and the sum of their depths by
$G(t)$, with $Z(0)=1$ and $G(0)=0$.  The object of the present study
is the variance across trees of the average depth of the leaves
within them, i.e. $G(t)/Z(t)$, and our main result is as follows.

\begin{theorem}\label{prop:var(G/Z)}
For a pure birth process, we have that
\begin{equation*}
    \lim_{t \to \infty} \Var{\frac{G(t)}{Z(t)}}=7.
\end{equation*}
\end{theorem}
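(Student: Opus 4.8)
The plan is to use the classical identification of the pure birth tree, \emph{conditioned on its size}, with a random binary search tree (RBST), thereby reducing the problem to the first two moments of the internal path length of an RBST together with the fluctuations of the Yule martingale.

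\textbf{Step 1: a path-length decomposition.} Let $I(t)$ be the internal path length of the tree at time $t$, i.e.\ the sum of the depths of the nodes that have already branched. Since the tree is a full binary tree with $Z(t)-1$ internal nodes, the standard ``external $=$ internal $+\,2n$'' identity (an easy induction on the number of branchings) gives $G(t)=I(t)+2(Z(t)-1)$, so
\[
\frac{G(t)}{Z(t)}=2-\frac{2}{Z(t)}+\frac{I(t)}{Z(t)} .
\]
Because each branching picks a uniformly random current leaf, independently of the branching times, conditionally on $\{Z(t)=m\}$ the tree shape is that of an RBST on $m-1$ keys; hence $I(t)$ given $\{Z(t)=m\}$ is distributed as the internal path length $I_{m-1}$ of such a tree, for which the first two moments are classically known:
\[
\E{I_n}=2(n+1)H_n-4n,\qquad \Var{I_n}=7n^2-4(n+1)^2H_n^{(2)}-2(n+1)H_n+13n ,
\]
where $H_n^{(2)}=\sum_{k=1}^n k^{-2}$.

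\textbf{Step 2: the law of total variance.} Conditioning on $Z(t)$, and noting $2-2/Z(t)$ is $Z(t)$-measurable,
\[
\Var{\frac{G(t)}{Z(t)}}=\E{\psi(Z(t))}+\Var{2H_{Z(t)-1}-2+\frac{2}{Z(t)}},\qquad \psi(m):=\frac{\Var{I_{m-1}}}{m^2}.
\]
For the first term, $\psi(m)$ is bounded in $m$ and, since $H_n^{(2)}\to\pi^2/6$, $\psi(m)\to 7-\tfrac{2\pi^2}{3}$; because $Z(t)\to\infty$ almost surely, dominated convergence yields $\E{\psi(Z(t))}\to 7-\tfrac{2\pi^2}{3}$. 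For the second term, $1/Z(t)\to0$ boundedly and $H_{Z(t)-1}=\ln Z(t)+\gamma+o(1)$ almost surely, so (using Cauchy--Schwarz to discard the cross terms) it equals $4\,\Var{\ln Z(t)}+o(1)$; and with the Yule martingale limit $e^{-\lambda t}Z(t)\to W$, $W\sim\mathrm{Exp}(1)$, one has $\ln Z(t)-\lambda t\to \ln W$, so this should converge to $4\,\Var{\ln W}=4\bigl(\Gamma''(1)-\Gamma'(1)^2\bigr)=4\cdot\tfrac{\pi^2}{6}=\tfrac{2\pi^2}{3}$, the variance of (minus) the standard Gumbel law. Summing the two pieces, the $\pi^2$ contributions cancel and the limit is $\bigl(7-\tfrac{2\pi^2}{3}\bigr)+\tfrac{2\pi^2}{3}=7$.

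\textbf{The main obstacle} is upgrading $\ln Z(t)-\lambda t\to\ln W$ from almost sure convergence to convergence of variances, which needs a uniform-integrability bound on $(\ln(e^{-\lambda t}Z(t)))^2$. Here I would use that $Z(t)$ is geometric with parameter $e^{-\lambda t}$: the upper tail of $\ln(e^{-\lambda t}Z(t))$ is doubly exponential, and the lower tail obeys $\Prob{\ln(e^{-\lambda t}Z(t))<-x}=\Prob{Z(t)<e^{\lambda t-x}}\le e^{-x}$ (from $\Prob{Z(t)\le k}\le ke^{-\lambda t}$), both uniformly in $t$; hence $\{(\ln(e^{-\lambda t}Z(t)))^2\}_{t\ge0}$ is uniformly integrable, which gives $\Var{\ln Z(t)}\to\pi^2/6$ and kills every $o(1)$ and cross term above. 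One should also record that $\Var{G(t)/Z(t)}<\infty$ for each $t$, which follows from $\E{Z(t)^2}<\infty$ and the conditional-moment formulas of Step 1. As an aside, the whole computation can instead be carried out by explicit summation against the geometric law of $Z(t)$: the series that arise, for instance $\sum_{m\ge1}e^{-\lambda t}(1-e^{-\lambda t})^{m-1}H_{m-1}=\lambda t$ and $\sum_{m\ge1}e^{-\lambda t}(1-e^{-\lambda t})^{m-1}H_{m-1}^{(2)}=\mathrm{Li}_2(1-e^{-\lambda t})$, have closed forms whose values as $e^{-\lambda t}\to0$ are polylogarithms at $1$, yielding a purely computational route to the constant $7$.
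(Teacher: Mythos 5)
Your proposal is correct, and while it rests on the same skeleton as the paper — the law of total variance conditioned on $Z(t)$, with the two terms converging to $7-\tfrac{2}{3}\pi^2$ and $\tfrac{2}{3}\pi^2$ respectively — it evaluates both terms by genuinely different means. For the conditional variance, the paper derives everything from scratch: recurrences for $\E{G_k}$, $\E{S_k}$, $\E{G_k^2}$ driven by the uniform choice of the splitting leaf, followed by a page of explicit series manipulation. You instead observe that, conditioned on $Z(t)=m$, the tree shape is a random binary search tree on $m-1$ keys, so that $G(t)=I_{m-1}+2(m-1)$ and the classical Knuth formulas $\E{I_n}=2(n+1)H_n-4n$ and $\Var{I_n}=7n^2-4(n+1)^2H_n^{(2)}-2(n+1)H_n+13n$ immediately give both the conditional mean $2(H_m-1)$ (matching the paper's Lemma 1) and the limit $7-\tfrac{2}{3}\pi^2$ (matching Lemma 2); bounded convergence then handles $\E{\psi(Z(t))}$. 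For the variance of the conditional mean, the paper sums $H_{Z(t)}$ and $H_{Z(t)}^2$ exactly against the geometric law of $Z(t)$, whereas you pass to the Yule martingale limit $e^{-\lambda t}Z(t)\to W\sim\mathrm{Exp}(1)$ and identify the limit as $4\Var{\ln W}=\tfrac{2}{3}\pi^2$; your uniform-integrability argument (doubly exponential upper tail, $\Prob{Z(t)\le k}\le ke^{-\lambda t}$ for the lower tail, both uniform in $t$) is exactly what is needed to upgrade the almost sure convergence to convergence of variances, and the cross terms are killed because $H_{n}-\ln n-\gamma$ is uniformly bounded. Your route buys brevity and a conceptual explanation of where the two pieces of the constant come from (the RBST path-length variance and the Gumbel variance of $\ln W$), at the price of importing a nontrivial classical formula; the paper's route is longer but entirely self-contained and elementary. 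Your closing aside — that the second term can also be computed exactly via $\sum_m e^{-\lambda t}(1-e^{-\lambda t})^{m-1}H_{m-1}=\lambda t$ — is in fact precisely what the paper's Lemma 3 does.
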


In addition to the results in~\cite{samuels1971distribution}, this
finding is potentially surprising because it is known that the two
processes $\{Z(t)\}$ and $\{G(t)\}$ have different growth rates,
$e^{\lambda t}$ and $te^{\lambda t}$, respectively
~\cite{jagers1969renewal,weber2016inferring}, from which one might
anticipate that the variability of the average depth of a tree
diverges to infinity as $t^2$. Those suppositions are incorrect as
it has recently been established that, for general continuous time
branching processes, $Z(t)$ and $G(t)$ are strongly correlated at
the level of sample paths~\cite{meli2018arXiv180707031M}, and
that, for a pure birth process, $\lim_t Z(t)/(tG(t))=2\lambda$ almost
surely. A visualization of the result in Theorem \ref{prop:var(G/Z)},
obtained by Monte Carlo simulation, is provided in
Fig.~\ref{fig:variance_avg_gen}.  Note that the result does not
depend on $\lambda$, which only influences the speed of convergence.

In order to evaluate $\Var{G(t)/Z(t)}$, we condition the average
generation $G(t)/Z(t)$ on the number of leaves at time $t$, $Z(t)$.
By the Law of Total Variance (e.g.~\cite{blitzstein2014introduction})
\begin{equation}\label{eq:law_total_variance}
    \Var{\frac{G(t)}{Z(t)}}=\E{\Var{\frac{G(t)}{Z(t)}\Big|Z(t)}} + \Var{\E{\frac{G(t)}{Z(t)}\Big|Z(t)}}
\end{equation}
and, in order to study the variance of the average depth of the leaves at time $t$, we study the quantities $\E{G(t)/Z(t)|Z(t)}$ and $\Var{G(t)/Z(t)|Z(t)}$ in Lemmas~\ref{thm:E(Var)} and~\ref{thm:Var(E)}, respectively. Theorem~\ref{prop:var(G/Z)} then follows.

\begin{figure}
    \centering
    \subfigure[\label{fig:variance_avg_gen_b}]{
    \includegraphics[scale=0.36]{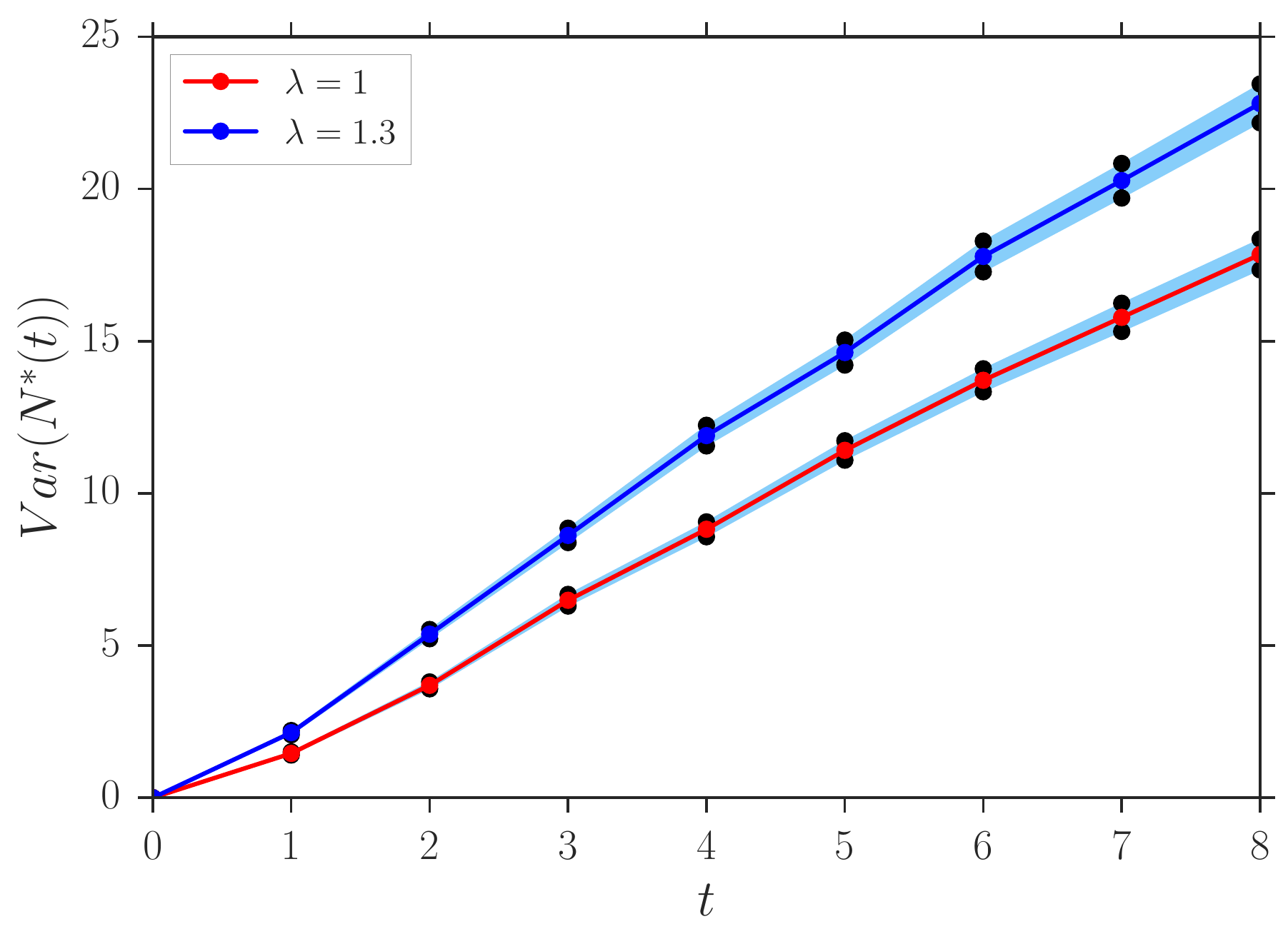}
    }
    \subfigure[\label{fig:variance_avg_gen_a}]{
    \includegraphics[scale=0.36]{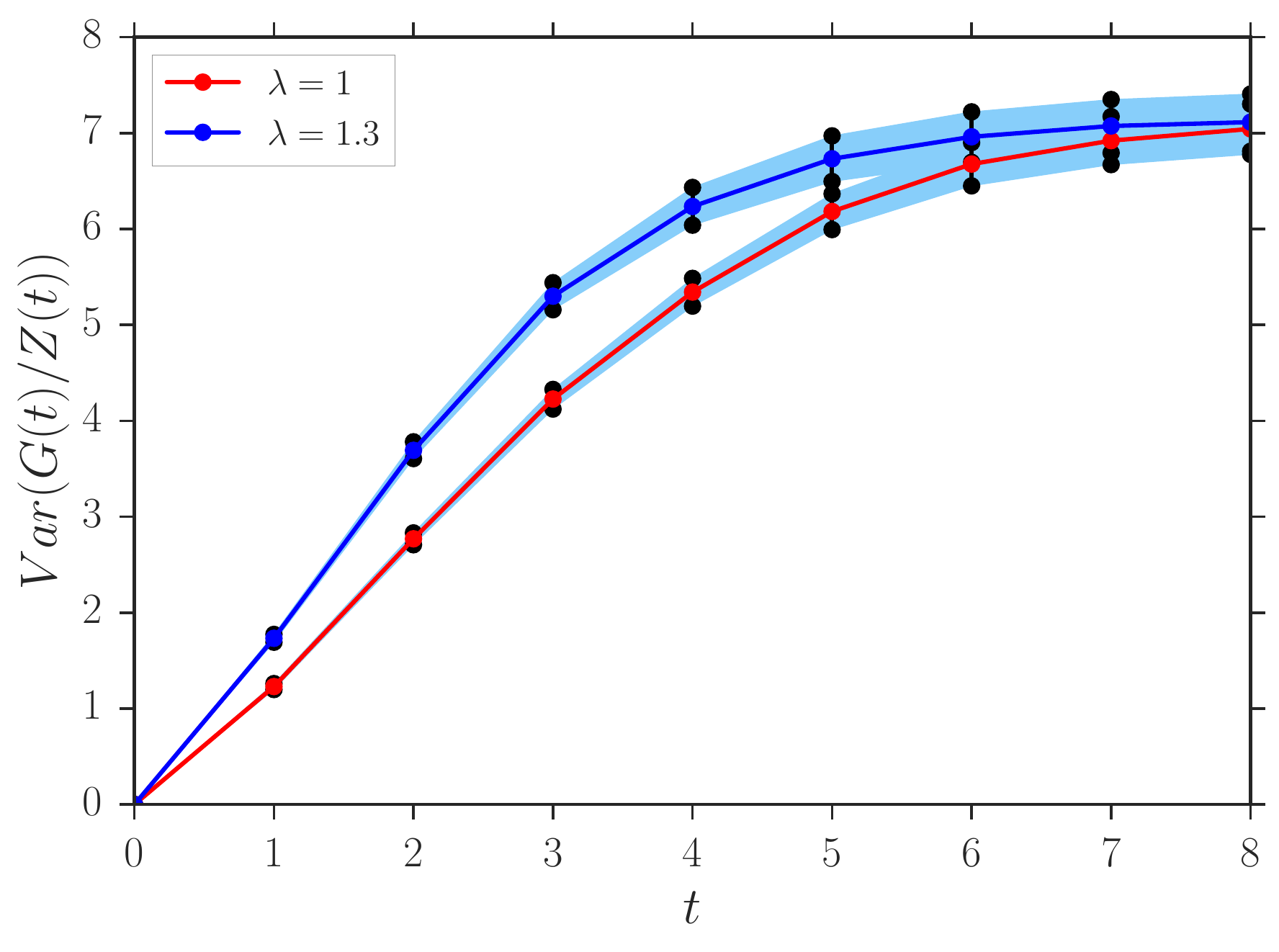}
    }
\caption{$10^4$ Monte Carlo simulations of a pure birth process
were used to determine the variance of the average depth of a leaf
in a random tree, $N^*(t)$, and the variance of the average depth of
a tree, $G(t)/Z(t)$, where $\lambda$ equals 1 (red lines) and 1.3
(blue lines), and the blue shaded region indicates 95\% confidence
intervals based on bootstrap percentiles~\cite[Chapter
13]{efron1994introduction}.
(a) Consistent with~\cite{samuels1971distribution}, $\Var{N^*(t)}\sim
2\lambda t$.
(b) Consistent with Theorem \ref{prop:var(G/Z)},
$\Var{G(t)/Z(t)}\sim 7$ irrespective of $\lambda$.
}\label{fig:variance_avg_gen}
\end{figure}

\section{Results}

Before proceeding with the analysis of the two terms on the RHS
of~\eqref{eq:law_total_variance}, we prove a lemma that will simplify
the proofs of Lemmas~\ref{thm:E(Var)} and~\ref{thm:Var(E)}. For
that, we introduce a new process, $\{\S(t)\}$, denoting the sum of the
squares of the depths of the leaves at time $t$, which appears when
the second moment of $G(t)/Z(t)$ is studied. In the following we
also consider the discrete-time process associated with $\{G(t)\}$
and $\{S(t)\}$, namely $\{G_k\}$ and $\{\S_k\}$, which account for
the sum and the sum of the squares of the depths of the leaves,
respectively, when the number of leaves is $k$.

\begin{lemma}\label{lem:G_k and S_k}
We have that 
\begin{align}
    \E{\frac{G(t)}{Z(t)} \Big| Z(t)=k}&=\frac{\E{G_k}}{k}=2\sum_{i=2}^{k} \frac{1}{i}, \qquad \qquad \frac{\E{\S_k}}{k}= 4 \sum_{i=2}^{k-1} \frac{\E{G_i}}{i(i+1)} + \frac{\E{G_k}}{k}, \label{eq:lemma1_a}\\
    \E{\frac{G(t)^2}{Z(t)^2}\Big|Z(t)=k}&=\frac{\E{G_{k}^2}}{k^2}=\frac{k+1}{k}\rbra{\sum_{i=1}^{k-1} \frac{\E{\S_i}}{i(i+1)(i+2)} + 4\sum_{i=1}^{k-1}\frac{1}{(i+1)(i+2)} + 4\sum_{i=1}^{k-1}\frac{\E{G_i}}{i(i+2)}}.\label{eq:lemma1_b}
\end{align}
\end{lemma}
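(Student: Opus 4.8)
The plan is to push everything down to the embedded discrete-time chain and then solve a short cascade of linear recursions. First I would record the reduction. In a pure birth process the leaves present at any moment carry independent $\mathrm{Exp}(\lambda)$ clocks, so at each branching event the leaf that splits is uniform among those present, and its identity is independent of all the holding times. Consequently the shape of the tree at the (random) instant when $Z$ first reaches $k$ has exactly the distribution of the tree produced by $k-1$ successive uniform splits, and this distribution does not depend on the value of $t$. Since $Z(t)=k$ precisely when $t$ lies in the $k$-th holding interval, the conditional law of $\rbra{G(t),\S(t)}$ given $Z(t)=k$ coincides with the (unconditional) law of $\rbra{G_k,\S_k}$; in particular $\E{G(t)/Z(t)\mid Z(t)=k}=\E{G_k}/k$ and $\E{G(t)^2/Z(t)^2\mid Z(t)=k}=\E{G_k^2}/k^2$, so the lemma reduces to statements about the discrete chain.

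Next I would set up the one-step dynamics of that chain. Write $\mathcal{F}_k$ for the information carried by the tree with $k$ leaves, and let $D$ be the depth of the leaf selected to split. Deleting that leaf and attaching two children one level below it gives the pathwise identities $G_{k+1}=G_k+D+2$ and $\S_{k+1}=\S_k+D^2+4D+2$, and squaring the first yields $G_{k+1}^2=G_k^2+2G_kD+4G_k+D^2+4D+4$. Because the split leaf is uniform among the $k$ leaves whose depths sum to $G_k$ and whose squared depths sum to $\S_k$, we have $\E{D\mid\mathcal{F}_k}=G_k/k$, $\E{D^2\mid\mathcal{F}_k}=\S_k/k$ and $\E{G_kD\mid\mathcal{F}_k}=G_k^2/k$. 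Taking conditional and then full expectations, with $\E{G_1}=\E{\S_1}=\E{G_1^2}=0$, produces
\begin{align*}
\E{G_{k+1}}&=\tfrac{k+1}{k}\E{G_k}+2,\\
\E{\S_{k+1}}&=\tfrac{k+1}{k}\E{\S_k}+\tfrac{4}{k}\E{G_k}+2,\\
\E{G_{k+1}^2}&=\tfrac{k+2}{k}\E{G_k^2}+\tfrac{4(k+1)}{k}\E{G_k}+\tfrac{1}{k}\E{\S_k}+4.
\end{align*}
Note that $\S$ enters the last recursion only through its first moment, so no additional cross-moments are required and the three recursions can be solved in the order displayed.

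Finally I would solve each recursion by spotting the integrating factor that makes it telescope. Dividing the first by $k$ shows that $\E{G_k}/k$ increments by $2/(k+1)$, giving $\E{G_k}/k=2\sum_{i=2}^{k}1/i$. Dividing the second by $k$ shows that $\E{\S_k}/k$ increments by $4\E{G_k}/(k(k+1))+2/(k+1)$; summing and using $\sum_{j=1}^{k-1}2/(j+1)=\E{G_k}/k$ collapses this to $\E{\S_k}/k=4\sum_{i=2}^{k-1}\E{G_i}/(i(i+1))+\E{G_k}/k$. For the last one, writing $\E{G_k^2}=k(k+1)u_k$ turns the recursion into $u_{k+1}=u_k+\E{\S_k}/(k(k+1)(k+2))+4/((k+1)(k+2))+4\E{G_k}/(k(k+2))$ with $u_1=0$; summing and multiplying back by $(k+1)/k$ yields precisely the stated expression for $\E{G_k^2}/k^2$. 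The only genuinely delicate point is the first reduction — making precise that conditioning on $Z(t)=k$ leaves no residual dependence on $t$ and recovers the uniform-split skeleton chain; everything after that is careful bookkeeping, with the one piece of ingenuity being the choice of the integrating factors $k$, $k$ and $k(k+1)$.
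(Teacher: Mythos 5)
Your proposal is correct and follows essentially the same route as the paper: the same reduction to the embedded uniform-split chain, the same one-step pathwise identities (your $D$ plays the role of the paper's $\sum_j I_j\Gamma_j$), the same three expectation recursions, and the same normalizations ($k$, $k$, $k(k+1)$, equivalently dividing by $k+1$ or $(k+1)(k+2)$) to telescope them. The only difference is presentational — indicator variables versus the depth of the selected leaf — and all the computations check out.
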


\begin{proof}
Throughout this proof, we condition on $Z(t)=k$ and denote by
$\Gamma_1, \Gamma_2, \ldots, \Gamma_k$ the depth of the $k$ leaves
present at time $t$, which are not independent.  From
the definitions, we have $G_k:=\sum_{i=1}^k \Gamma_i$ and
$\S_k:=\sum_{i=1}^k \Gamma_i^2$. The idea of the proof is to recover
the formulas given above by finding recurrence equations for $\E{G_k},
\E{S_k}$, and $\E{G_k^2}$.

For $j \in \{1,2,\ldots,k\}$, denote by $I_j$ a random variable
that takes value $1$ if the $j$-th leaf is the first one, among the
$k$ existing, to extend the tree with two new leaves, and $0$
otherwise. The random variables in the set $\{I_j, \Gamma_1, \ldots, \Gamma_k\}$
are independent for $j \in \{1,2,\ldots,k\}$ and, due to the
memoryless property of the exponential distribution, $\Prob{I_j=1}=1/k$
for all $j \in \{1,2,\ldots,k\}$, with $k$ the number of leaves in
the tree. Furthermore, the $I_j$ are not independent of each other
because only one of them can assume value $1$, i.e. $\sum_{j=1}^k
I_j=1$, implying that  $I_j^2=I_j$ and $I_j I_\ell=0$ if $j\neq
\ell$. With that in mind, we establish the following relations

\begin{align}
    G_{k+1}=G_k + &\sum_{j=1}^k I_j \Gamma_j +2, \qquad \qquad S_{k+1}=\S_k + \sum_{j=1}^k I_j \rbra{2(\Gamma_j+1)^2-\Gamma_j^2}, \label{eq:G_k+1_and_S_k+1} \\
    G_{k+1}^2&=G_k^2 + \rbra{\sum_{j=1}^k I_j \Gamma_j}^2 +4 + 4G_k + 4\sum_{j=1}^k I_j \Gamma_j + 2G_k\sum_{j=1}^k I_j \Gamma_j \notag \\
    &=G_k^2 + \sum_{j=1}^k I_j \Gamma_j^2 +4 + 4G_k + 4\sum_{j=1}^k I_j \Gamma_j + 2G_k\sum_{j=1}^k I_j \Gamma_j. \label{eq:G^2_k+1}
\end{align}
From the first equation in~\eqref{eq:G_k+1_and_S_k+1} we obtain
\begin{align*}
    \E{G_{k+1}}&=\E{G_k} + \sum_{j=1}^k\E{I_j \Gamma_j} +2 = \E{G_k} + \sum_{j=1}^k\E{I_j}\E{\Gamma_j} +2 \notag \\ 
    &= \E{G_k} + \frac{1}{k}\E{\sum_{j=1}^k \Gamma_j} +2 = \E{G_k} + \frac{1}{k}\E{G_k} +2 = \frac{k+1}{k}\E{G_k} +2,
\end{align*}
where we have used that $I_j$ and $\Gamma_j$ are independent. This gives the following recurrence relation
$\E{G_{k+1}}(k+1)^{-1}=\E{G_{k}}k^{-1}+ 2(k+1)^{-1}$,
that, solved with initial condition $\E{G_1}=0$, results in the first formula in~\eqref{eq:lemma1_a}.

Similarly, using the second equation in~\eqref{eq:G_k+1_and_S_k+1}, we have that
\begin{align*}
    \E{\S_{k+1}}&= \E{\S_k}+ \sum_{j=1}^k \E{I_j} \E{2(\Gamma_j+1)^2-\Gamma_j^2}
    =\E{\S_k}+\frac{1}{k}\sum_{j=1}^k \E{\Gamma_j^2 +4\Gamma_j +2} \\
    &= \E{\S_k} + \frac{1}{k}\E{\S_k} + \frac{4}{k}\E{G_k} + 2= \frac{k+1}{k}\E{\S_k} + \frac{4}{k}\E{G_k} + 2,
\end{align*}
from which we get the recurrence equation $\E{\S_{k+1}}(k+1)^{-1}
= \E{\S_k}k^{-1} + 4\E{G_k}(k(k+1))^{-1}+2(k+1)^{-1}$. Solving this
recursion with $\E{\S_1}=\E{G_1}=0$, we obtain the second result
in~\eqref{eq:lemma1_a}.

Using~\eqref{eq:G^2_k+1} and the two results just found (i.e. the formulas in~\eqref{eq:lemma1_a}), we can now find an expression for $\E{G_k^2}$. 
\begin{align*}
    \E{G_{k+1}^2}=& \E{G_k^2} + \frac{1}{k} \sum_{j=1}^k \E{\Gamma_j^2} +4 + 4\E{G_k} + \frac{4}{k}\sum_{j=1}^k \E{\Gamma_j} + 2\E{G_k\Big(\sum_{j=1}^k I_j \Gamma_j\Big)} \\
    =& \E{G_k^2}+ \frac{1}{k} \E{\S_k} + 4 + \left(4+\frac{4}{k}\right)\E{G_k} + \frac{2}{k} \E{G_k \sum_{j=1}^k \Gamma_j} \\
    =& \E{G_k^2}+ \frac{\E{\S_k}}{k}  + 4 + \frac{4(k+1)}{k}\E{G_k}  + \frac{2}{k} \E{G_k^2} \\
    =&\frac{k+2}{k} \E{G_k^2}+ \frac{\E{\S_k}}{k}  + 4 + \frac{4(k+1)}{k}\E{G_k}. 
\end{align*}
The equation above can be rewritten as the recurrence equation
\begin{equation*}
    \frac{\E{G_{k+1}^2}}{(k+1)(k+2)}= \frac{\E{G_k^2}}{k(k+1)} + \frac{\E{\S_k}}{k(k+1)(k+2)}  + \frac{4}{(k+1)(k+2)} + \frac{4\E{G_k}}{k(k+2)},
\end{equation*}
that, when solved with initial condition $\E{G_1}=\E{G_1^2}=\E{\S_1}=0$, gives~\eqref{eq:lemma1_b}.
\end{proof}

We now use Lemma~\ref{lem:G_k and S_k} to study the limit behaviour of the first term on the RHS of~\eqref{eq:law_total_variance}.

\begin{lemma}\label{thm:E(Var)}
For a pure birth process, we have that 
\begin{equation*}
    \lim_{t \to \infty} \E{\Var{\frac{G(t)}{Z(t)} \Big| Z(t) } }= 7-\frac{2}{3}\pi^2.
\end{equation*}
\end{lemma}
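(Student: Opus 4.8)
The plan is to reduce the statement to a purely deterministic limit and then evaluate it. By Lemma~\ref{lem:G_k and S_k}, the conditional variance $\Var{G(t)/Z(t)\mid Z(t)=k}$ equals $v_k:=\E{G_k^2}/k^2-\bigl(\E{G_k}/k\bigr)^2$, which is finite for each $k$, so that $\E{\Var{G(t)/Z(t)\mid Z(t)}}=\sum_{k\ge1}\Prob{Z(t)=k}\,v_k$. Since $Z(t)\to\infty$ almost surely, $\Prob{Z(t)<K}\to0$ for every fixed $K$; hence, if $v_k$ converges to a limit $v_\infty$, then splitting the sum at a large $K$ shows that both $\liminf_{t\to\infty}$ and $\limsup_{t\to\infty}$ of $\sum_{k}\Prob{Z(t)=k}v_k$ lie in $[\inf_{k\ge K}v_k,\ \sup_{k\ge K}v_k]$, and letting $K\to\infty$ gives $\lim_{t\to\infty}\E{\Var{G(t)/Z(t)\mid Z(t)}}=v_\infty$. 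It therefore suffices to prove $v_\infty=\tfrac{}{}7-\tfrac23\pi^2$.

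Write $H_k=\sum_{i=1}^k 1/i$ and $H_k^{(2)}=\sum_{i=1}^k 1/i^2$. Lemma~\ref{lem:G_k and S_k} gives $\E{G_k}/k=2(H_k-1)$ directly; substituting this into the second identity of~\eqref{eq:lemma1_a} and using the well-known identities $\sum_{i=1}^n H_i/i=\tfrac12(H_n^2+H_n^{(2)})$ and $\sum_{i=1}^n H_i/(i+1)=\tfrac12(H_{n+1}^2-H_{n+1}^{(2)})$ gives the closed form $\E{\S_k}/k=4H_k^2-6H_k-4H_k^{(2)}+6$. I would then feed $\E{G_k}/k$ and $\E{\S_k}/k$ into~\eqref{eq:lemma1_b}. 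Of the three sums there, the one weighted by $\E{\S_i}/i$ and the purely rational one have terms of order $(\log i)^2/i^2$ and $1/i^2$ respectively and so converge absolutely, whereas $4\sum_{i=1}^{k-1}\E{G_i}/(i(i+2))=8\sum_{i=1}^{k-1}(H_i-1)/(i+2)$ admits a closed form (telescoping together with the harmonic-sum identities) and carries all of the growth, which is of order $H_k^2$. After multiplying by $(k+1)/k$, every ``$\times 1/k$'' correction is $o(1)$ because $H_k^2/k\to0$, and subtracting $\bigl(\E{G_k}/k\bigr)^2=4(H_k-1)^2$ cancels the $H_k^2$ and $H_k$ terms exactly, leaving
\[
v_k=\sum_{i=1}^{k-1}\frac{4H_i^2-6H_i-4H_i^{(2)}+6}{(i+1)(i+2)}-4H_k^{(2)}+2+o(1).
\]

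Since $H_k^{(2)}\to\pi^2/6$ and the displayed series converges, $v_\infty=A_\infty+2-\tfrac23\pi^2$ with $A_\infty:=\sum_{i\ge1}\bigl(4H_i^2-6H_i-4H_i^{(2)}+6\bigr)/\bigl((i+1)(i+2)\bigr)$. To evaluate $A_\infty$ I would use $1/((i+1)(i+2))=1/(i+1)-1/(i+2)$ and Abel summation to obtain
\[
\sum_{i\ge1}\frac1{(i+1)(i+2)}=\tfrac12,\qquad \sum_{i\ge1}\frac{H_i}{(i+1)(i+2)}=1,\qquad \sum_{i\ge1}\frac{H_i^{(2)}}{(i+1)(i+2)}=\tfrac{\pi^2}{6}-1,
\]
and the only genuinely delicate one, $\sum_{i\ge1}H_i^2/((i+1)(i+2))=1+\pi^2/6$, which follows from $H_i^2-H_{i-1}^2=2H_i/i-1/i^2$ combined with $\sum_{i\ge1}H_i/(i(i+1))=\pi^2/6$ and $\sum_{i\ge1}1/(i^2(i+1))=\pi^2/6-1$. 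Combining, $A_\infty=4\bigl(1+\tfrac{\pi^2}{6}\bigr)-6-4\bigl(\tfrac{\pi^2}{6}-1\bigr)+6\cdot\tfrac12=5$, hence $v_\infty=5+2-\tfrac23\pi^2=7-\tfrac23\pi^2$, which together with the truncation argument of the first paragraph proves the lemma.

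The step I expect to be the main obstacle is the cancellation in the second paragraph: one has to expand $\E{G_k^2}/k^2$ precisely enough to see its $(\log k)^2$- and $(\log k)$-order terms annihilate exactly against $\bigl(\E{G_k}/k\bigr)^2$, since any constant-order error in that expansion would change the final value. By contrast, the Euler-type sums in the third paragraph — in particular $\sum_{i\ge1}H_i^2/((i+1)(i+2))$ — are not difficult once they are set up via Abel summation.
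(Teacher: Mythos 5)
Your proposal is correct and follows essentially the same route as the paper: both reduce the claim to $\lim_{k\to\infty}\Var{G_k/k}$ via the almost sure divergence of $Z(t)$ (your truncation argument makes this step slightly more explicit) and then evaluate the limit from the recursions of Lemma~\ref{lem:G_k and S_k}. The only difference is bookkeeping — you carry explicit harmonic-number closed forms such as $\E{\S_k}/k=4H_k^2-6H_k-4H_k^{(2)}+6$ and cancel the $(\log k)^2$ growth directly, where the paper adds and subtracts $4\sum_i\E{G_i}/i^2$ and reorders summations — and all of your intermediate identities and the final value check out.
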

\begin{proof}

Given that $\lim_{t \to \infty} Z(t)=\infty$ a.s.~\cite[Chapter 5]{harris1963theory},  for every fixed  $k \in \mathbbm{N}$ we have that $\lim_{t \to \infty} \Prob{Z(t)=k}=0$. This implies that 
\begin{equation*}
    \lim_{t \to \infty}\E{\Var{\frac{G(t)}{Z(t)}\Big|Z(t)}}= \lim_{t \to \infty} \sum_{k=1}^\infty \Var{\frac{G_k}{k}\Big|Z(t)=k} \Prob{Z(t)=k} = \lim_{k \to \infty}  \Var{\frac{G_k}{k}}. 
\end{equation*}

Using Lemma~\ref{lem:G_k and S_k}, we can now compute this variance:
\begin{align*}
    \Var{\frac{G_k}{k}}&=  \frac{\E{G_k^2}}{k^2}-\frac{\E{G_k}^2}{k^2}\\
    &= \frac{k+1}{k}\bigg[\sum_{i=1}^{k-1}\frac{\E{\S_i}}{i(i+1)(i+2)} + 4\sum_{i=1}^{k-1}\frac{1}{(i+1)(i+2)} + 4\sum_{i=1}^{k-1}\frac{\E{G_i}}{i(i+2)}\bigg] - \frac{\E{G_k}^2}{k^2}\\
    &= \frac{k+1}{k}\bigg[\sum_{i=1}^{k-1}\frac{\E{\S_i}}{i(i+1)(i+2)} + 4\sum_{i=1}^{k-1}\frac{1}{(i+1)(i+2)} + 4\sum_{i=1}^{k}\frac{\E{G_i}}{i^2}\\
    & \quad+ 4\sum_{i=1}^{k-1}\Big( \frac{\E{G_i}}{i(i+2)} - \frac{\E{G_i}}{i^2} \Big) -4 \frac{\E{G_k}}{k^2}\bigg] - \frac{\E{G_k}^2}{k^2}\\
    &= \frac{k+1}{k}\bigg[\sum_{i=1}^{k-1}\frac{\E{\S_i}}{i(i+1)(i+2)} + 4\sum_{i=1}^{k-1}\frac{1}{(i+1)(i+2)} + 4 \sum_{i=2}^{k} \frac{1}{i^2} + \frac{\E{G_k}^2}{k^2}  \\
    &\quad - 8 \sum_{i=1}^{k-1}\frac{\E{G_i}}{i^2(i+2)}  -4 \frac{\E{G_k}}{k^2}\bigg] - \frac{\E{G_k}^2}{k^2}\\
    &= \frac{k+1}{k}\bigg[\sum_{i=1}^{k-1}\frac{\E{\S_i}}{i(i+1)(i+2)} + 4\sum_{i=1}^{k-1}\frac{1}{(i+1)(i+2)} + 4 \sum_{i=2}^{k} \frac{1}{i^2}  \\
    &\quad - 8 \sum_{i=1}^{k-1}\frac{\E{G_i}}{i^2(i+2)}  -4 \frac{\E{G_k}}{k^2}\bigg] + \frac{\E{G_k}^2}{k^3},
\end{align*}
where in the third equality we have added and subtracted the quantity
\begin{align*}
    4\sum_{i=1}^{k}\frac{\E{G_i}}{i^2} =& 8 \sum_{i=2}^{k} \frac{1}{i} \sum_{j=2}^{i} \frac{1}{j} = 8 \sum_{i=2}^{k} \frac{1}{i^2} + 8 \sum_{i=2}^{k} \sum_{j=2}^{i-1}\frac{1}{ij} \\
    =&8 \sum_{i=2}^{k} \frac{1}{i^2} + 4 \rbra{ \rbra{\sum_{i=2}^{k}\frac{1}{i}}^2 - \sum_{i=2}^{k} \frac{1}{i^2}} 
    = 4 \sum_{i=2}^{k} \frac{1}{i^2} + \frac{\E{G_k}^2}{k^2}.
\end{align*}
Taking the limit as $k \to \infty$, we have that
\begin{align}\label{eq:lim_var}
    \lim_{k \to \infty} \Var{\frac{G_k}{k}}=& \sum_{i=1}^{\infty}\frac{\E{\S_i}}{i(i+1)(i+2)} + 4\sum_{i=1}^{\infty}\frac{1}{(i+1)(i+2)} + 4\sum_{i=2}^{\infty}\frac{1}{i^2} - 8 \sum_{i=1}^{\infty}\frac{\E{G_i}}{i^2(i+2)} \notag\\
    =& \sum_{i=1}^{\infty}\frac{\E{\S_i}}{i(i+1)(i+2)} +2 +4 \Big(\frac{\pi^2}{6} -1\Big) - 8 \sum_{i=1}^{\infty}\frac{\E{G_i}}{i^2(i+2)}.
\end{align}
Using Lemma~\ref{lem:G_k and S_k}, the first term on the RHS of~\eqref{eq:lim_var} becomes
\begin{multline}\label{eq:first term}
    \sum_{i=1}^{\infty}\frac{1}{(i+1)(i+2)}\left(4 \sum_{k=2}^{i-1} \frac{1}{k+1} \frac{\E{G_k}}{k} + \frac{\E{G_i}}{i} \right) \\
    = 8 \sum_{i=1}^{\infty}\frac{1}{(i+1)(i+2)} \sum_{k=2}^{i-1} \frac{1}{k+1} \sum_{j=2}^k \frac{1}{j}
    + 2\sum_{i=1}^{\infty}\frac{1}{(i+1)(i+2)}\sum_{k=2}^i \frac{1}{k}.
\end{multline}
The first term on the RHS of~\eqref{eq:first term} is given by
\begin{align*}
    8 \sum_{j=2}^{\infty} \frac{1}{j} \sum_{k=j}^{\infty} \frac{1}{k+1} \sum_{i=k+1}^{\infty} \frac{1}{(i+1)(i+2)} = 8 \sum_{j=2}^{\infty} \frac{1}{j} \sum_{k=j}^{\infty} \frac{1}{k+1}\frac{1}{k+2}=8 \sum_{j=2}^{\infty} \frac{1}{j}\frac{1}{j+1}=4,
\end{align*}
whereas the second one is given by
\begin{equation*}
    2\sum_{k=2}^{\infty} \frac{1}{k} \sum_{i=k}^{\infty}\frac{1}{(i+1)(i+2)}=2\sum_{k=2}^{\infty} \frac{1}{k} \frac{1}{k+1}=1.
\end{equation*}
So, the first sum in the RHS of~\eqref{eq:lim_var} is equal to $4+1=5$.
For the last sum in the RHS of~\eqref{eq:lim_var}, we have
\begin{align*}
    - 8 \sum_{i=1}^{\infty}\frac{1}{i(i+2)} \frac{\E{G_i}}{i}&= - 16 \sum_{i=1}^{\infty}\frac{1}{i(i+2)} \sum_{j=2}^i \frac{1}{j}=- 16 \sum_{j=2}^{\infty} \frac{1}{j} \sum_{i=j}^{\infty}\frac{1}{i(i+2)} \\
    &= - 16 \sum_{j=2}^{\infty} \frac{1}{j} \frac{1+2j}{2j(j+1)}= -\frac{4}{3}(\pi^2-3).
\end{align*}
Joining all these results, we obtain
\begin{equation*}
    \lim_{k \to \infty} \Var{\frac{G_k}{k}}=5 + 2 +4 \Big(\frac{\pi^2}{6} -1\Big) -\frac{4}{3}(\pi^2-3)=7-\frac{2}{3}\pi^2.
\end{equation*}
\end{proof}
Lemma~\ref{lem:G_k and S_k} allows us to also understand the behaviour of the conditional variance of the expected average depth of the leaves given their number.

\begin{lemma}\label{thm:Var(E)}
For a pure birth process, we have that  
\begin{equation*}
    \lim_{t \to \infty} \Var{\E{\frac{G(t)}{Z(t)} | Z(t) } }= \frac{2}{3}\pi^2 \approx 6.58.
\end{equation*}
\end{lemma}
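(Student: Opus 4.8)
The plan is to combine the exact form of the conditional expectation provided by Lemma~\ref{lem:G_k and S_k} with the classical fact that, for a pure birth process, the number of leaves $Z(t)$ has a geometric distribution.

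By the first identity in~\eqref{eq:lemma1_a}, $\E{G(t)/Z(t)\mid Z(t)=k}=2\sum_{i=2}^{k}1/i=2(H_k-1)$, where $H_k:=\sum_{i=1}^{k}1/i$ denotes the $k$-th harmonic number; note this does not depend on $t$. Hence $\E{G(t)/Z(t)\mid Z(t)}$ is the deterministic function $2(H_{Z(t)}-1)$ of $Z(t)$, so that
\begin{equation*}
\Var{\E{\frac{G(t)}{Z(t)}\,\Big|\,Z(t)}}=\Var{2(H_{Z(t)}-1)}=4\,\Var{H_{Z(t)}},
\end{equation*}
and it suffices to prove that $\Var{H_{Z(t)}}\to\pi^2/6$ as $t\to\infty$.

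The next ingredient is that for a pure birth process $Z(t)$ is geometrically distributed, $\Prob{Z(t)=k}=e^{-\lambda t}(1-e^{-\lambda t})^{k-1}$ for $k\ge1$ (see e.g.~\cite{harris1963theory}), so that, writing $p:=e^{-\lambda t}$, one has $\Prob{Z(t)\ge i}=(1-p)^{i-1}$. I would then write $H_{Z(t)}=\sum_{i\ge1}\frac1i\mathbbm{1}\{Z(t)\ge i\}$ and compute the first two moments by interchanging the (absolutely convergent, since $H_k=O(\log k)$ and the geometric law has all moments) sums:
\begin{equation*}
\E{H_{Z(t)}}=\sum_{i\ge1}\frac{(1-p)^{i-1}}{i},\qquad \E{H_{Z(t)}^2}=\sum_{i,j\ge1}\frac{(1-p)^{\max(i,j)-1}}{ij}.
\end{equation*}
Splitting the double sum over $\{i\ge j\}$ and $\{i<j\}$ and using the standard generating-function identities $\sum_{i\ge1}x^i/i=-\log(1-x)$, $\sum_{i\ge1}x^i/i^2=\mathrm{Li}_2(x)$ and $\sum_{i\ge1}H_ix^i/i=\mathrm{Li}_2(x)+\tfrac12\log^2(1-x)$, these series collapse to closed forms, and after simplification one finds
\begin{equation*}
\Var{H_{Z(t)}}=\frac{\mathrm{Li}_2(1-p)}{1-p}-\frac{p\log^2 p}{(1-p)^2}.
\end{equation*}
Letting $t\to\infty$, i.e. $p\to0$, and using $\mathrm{Li}_2(1)=\sum_{i\ge1}i^{-2}=\pi^2/6$ together with $p\log^2 p\to0$ gives $\Var{H_{Z(t)}}\to\pi^2/6$, hence $\lim_{t\to\infty}\Var{\E{G(t)/Z(t)\mid Z(t)}}=4\cdot\pi^2/6=\tfrac23\pi^2$, as claimed.

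I do not expect a serious obstacle: once the geometric law is used, everything reduces to elementary series manipulations. The only points needing care are the justification of the interchange of summation in the double sum for the second moment (valid by nonnegativity of the terms, with finiteness guaranteed by $H_k=O(\log k)$) and the elementary limit $p\log^2 p\to0$. A more probabilistic alternative would be to invoke the martingale convergence $e^{-\lambda t}Z(t)\to W$ with $W$ standard exponential, note that $H_{Z(t)}-\lambda t\to\gamma+\log W$ (since $H_k=\log k+\gamma+o(1)$) and that $-\log W$ is standard Gumbel with variance $\pi^2/6$, and then upgrade this convergence in distribution to convergence of variances by showing that $\{(H_{Z(t)}-\lambda t)^2\}_{t\ge0}$ is uniformly integrable; that uniform integrability would be the main technical step of this alternative route, which is why I would favour the direct computation above.
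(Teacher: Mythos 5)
Your proposal is correct and follows essentially the same route as the paper: both reduce the problem via Lemma~\ref{lem:G_k and S_k} to computing $4\Var{H_{Z(t)}}$ for the geometric variable $Z(t)$, evaluate the first and second moments exactly, and observe that the $O(\lambda^2t^2)$ terms cancel in the limit, leaving $4\sum_k k^{-2}=\tfrac23\pi^2$. The only difference is cosmetic: you sum the resulting series with dilogarithm/generating-function identities, whereas the paper differentiates the series in $t$ to identify them as $\lambda t$ and $\lambda^2t^2$.
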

\begin{proof}
From Lemma~\ref{lem:G_k and S_k} we know that
\begin{align} \label{eq_variance_1}
\Var{\E{\frac{G(t)}{Z(t)} | Z(t) } }&=\Var{2\sum_{i=2}^{Z(t)} \frac{1}{i} } = 4\Var{\sum_{i=1}^{Z(t)} \frac{1}{i}} = 4\rbra{\E{\left(\sum_{i=1}^{Z(t)}\frac{1}{i}\right)^2} - \left(\E{\sum_{i=1}^{Z(t)} \frac{1}{i} } \right)^2},
\end{align}
where, in the second inequality, we have used the fact that the
variance of a process doesn't change when a constant is added.
Given that $\{Z(t)\}$ is a pure birth process, the distribution 
of $Z(t)$ is given by (e.g.~\cite[pg. 430]{resnick2013adventures})
\begin{equation*}
    \Prob{Z(t) = k} = e^{-\lambda t} (1-e^{-\lambda t})^{k-1}, \quad k=1,2,\ldots
\end{equation*}
where $1/\lambda$ is the expected time before a leaf generates two new leaves,
which allows us to evaluate the second term in \eqref{eq_variance_1} exactly: 
\begin{align*}
\E{\sum_{i=1}^{Z(t)} \frac{1}{i} } &=\sum_{k=1}^\infty \P(Z(t)=k) \sum_{i=1}^k \frac{1}{i} = \frac{e^{-\lambda t}}{1-e^{-\lambda t}} \sum_{k=1}^\infty (1-e^{- \lambda t})^{k} \sum_{i=1}^k \frac{1}{i} \\ 
&=\frac{e^{-\lambda t}}{1-e^{-\lambda t}} \sum_{i=1}^\infty \frac{1}{i} \sum_{k=i}^\infty (1-e^{-\lambda t})^{k} = \frac{1}{(1-e^{-\lambda t})} \sum_{i=1}^\infty \frac{1}{i} (1-e^{-\lambda t})^{i}.
\end{align*}
Let $f(t) := \sum_{i=1}^\infty  (1-e^{-\lambda t})^{i}/i$. Then
\begin{equation*}
    f'(t) = \lambda e^{-\lambda t} \sum_{i=1}^\infty \frac{1}{i} i (1-e^{-\lambda t})^{i-1} = \lambda e^{-\lambda t} \sum_{i=1}^\infty (1-e^{-\lambda t})^{i-1} = \lambda,
\end{equation*}
and, given $f(0)=0$, we have that $f(t)=\lambda t$. This implies that 
\begin{equation}\label{eq:mean_g_z}
    \E{\sum_{i=1}^{Z(t)} \frac{1}{i} } = \frac{\lambda t}{(1-e^{-\lambda t})} = \lambda t + o(1),
\end{equation}
and the second term in the brackets on the RHS of \eqref{eq_variance_1} is therefore $(\lambda^2 t^2)/(1-e^{-\lambda t})^2$.

Consider the first term on the RHS of \eqref{eq_variance_1}.
\begin{align} \label{eq_variance_2}
\E{\left(\sum_{i=1}^{Z(t)} \frac{1}{i}\right)^2} &= \frac{e^{-\lambda t}}{1-e^{-\lambda t}} \sum_{i=1}^\infty \left(\sum_{k=1}^i \frac{1}{k}\right)^2 (1-e^{-\lambda t})^{i} \notag \\
&= \frac{e^{-\lambda t}}{1-e^{-\lambda t}} \left( \sum_{i=1}^\infty \sum_{k=1}^i \frac{1}{k^2} (1-e^{-\lambda t})^{i} + 2 \sum_{i=1}^\infty \sum_{k=1}^i \sum_{j=1}^{k-1} \frac{1}{k} \frac{1}{j} (1-e^{-\lambda t})^{i} \right).
\end{align}
The first term in the brackets on the RHS of~\eqref{eq_variance_2} is given by
\begin{equation*}
    \sum_{i=1}^\infty \sum_{k=1}^i \frac{1}{k^2} (1-e^{-\lambda t})^{i} = \sum_{k=1}^\infty \sum_{i=k}^\infty \frac{1}{k^2} (1-e^{-\lambda t})^{i} = e^{\lambda t} \sum_{k=1}^\infty \frac{1}{k^2} (1-e^{-\lambda t})^{k}.
\end{equation*}
For the second term, we have that
\begin{equation*}
  2\sum_{i=1}^\infty \sum_{k=1}^i \sum_{j=1}^{k-1} \frac{1}{k} \frac{1}{j} (1-e^{-\lambda t})^{i} = 2\sum_{k=1}^\infty \sum_{j=1}^{k-1} \sum_{k=i}^{\infty} \frac{1}{k} \frac{1}{j} (1-e^{-\lambda t})^{i} = 2e^{\lambda t} \sum_{k=1}^\infty \sum_{j=1}^{k-1} \frac{1}{k} \frac{1}{j} (1-e^{-\lambda t})^{k}.  
\end{equation*}
Denoting with $g(t) := 2\sum_{k=1}^\infty \sum_{j=1}^{k-1} (1-e^{-\lambda t})^{k}/(kj)$ and noticing that $g(0)=0$ and
\begin{align*}
    g'(t) &= \frac{2 \lambda e^{-\lambda t}}{1-e^{-\lambda t}} \sum_{k=1}^\infty \sum_{j=1}^{k-1} \frac{1}{j} (1-e^{-\lambda t})^{k} = \frac{2 \lambda e^{-\lambda t}}{1-e^{-\lambda t}} \sum_{j=1}^{\infty} \sum_{k=j+1}^\infty \frac{1}{j} (1-e^{-\lambda t})^{k} \\ 
    &= \frac{2 \lambda }{1-e^{-\lambda t}} \sum_{j=1}^{\infty} \frac{1}{j} (1-e^{-\lambda t})^{j+1} = 2\lambda f(t) = 2\lambda^2 t,
\end{align*}
we obtain that $g_2(t) = \lambda^2 t^2$, and the second term on the RHS of \eqref{eq_variance_2}  is thus $(\lambda^2 t^2)/(1-e^{-\lambda t})$.

So, joining all the results, we have that 
\begin{align*}
    \lim_{t \to \infty} \Var{\E{\frac{G(t)}{Z(t)} | Z(t)  } }=  \lim_{t \to \infty} 4\rbra{  \sum_{k=1}^\infty \frac{(1-e^{-\lambda t})^{k-1}}{k^2}  + \frac{\lambda^2 t^2}{1-e^{-\lambda t}} - \frac{\lambda^2 t^2}{(1-e^{-\lambda t})^2}} = 4 \sum_{k=1}^\infty \frac{1}{k^2}  = \frac{2}{3} \pi^2
\end{align*}

\end{proof}

Theorem~\ref{prop:var(G/Z)} follows from equation \eqref{eq:law_total_variance} 
using the results in Lemmas \ref{thm:E(Var)} and \ref{thm:Var(E)}.

{\bf Acknowledgments:} The authors thank Tom S. Weber (WEHI) for contributing to the conjecture of Theorem~\ref{prop:var(G/Z)}.
Part of this work was supported by Science Foundation Ireland grant 12 IP 1263.


\begin{thebibliography}{10}

\bibitem{biggins1997note}
J.~D. Biggins and D.~R. Grey.
\newblock A note on the growth of random trees.
\newblock {\em Stat. Probab. Lett.}, 32(4):339--342, 1997.

\bibitem{blitzstein2014introduction}
J.~K. Blitzstein and J.~Hwang.
\newblock {\em Introduction to probability}.
\newblock Chapman and Hall/CRC, 2014.

\bibitem{devroye1986note}
L.~Devroye.
\newblock A note on the height of binary search trees.
\newblock {\em J. ACM}, 33(3):489--498, 1986.

\bibitem{efron1994introduction}
B.~Efron and R.~J. Tibshirani.
\newblock {\em An introduction to the bootstrap}.
\newblock CRC press, 1994.

\bibitem{flajolet1980exploring}
P.~Flajolet and A.~Odlyzko.
\newblock Exploring binary trees and other simple trees.
\newblock In {\em 21st FOCS}, pages 207--216. IEEE, 1980.

\bibitem{harris1963theory}
T.~E. Harris.
\newblock {\em The theory of branching processes}.
\newblock Springer-Verlag, Berlin, 1963.

\bibitem{jagers1969renewal}
P.~Jagers.
\newblock Renewal theory and the almost sure convergence of branching
  processes.
\newblock {\em Ark. Mat.}, 7(6):495--504, 1969.

\bibitem{Kimmel02}
M.~Kimmel and D.~E. Axelrod.
\newblock {\em Branching Processes in Biology}.
\newblock Springer, 2002.

\bibitem{marchingo2016}
J.~M. Marchingo, G.~Prevedello, A.~J. Kan, S.~Heinzel, P.~D. Hodgkin, and K.~R.
  Duffy.
\newblock {T} cell stimuli independently sum to regulate an inherited clonal
  division fate.
\newblock {\em Nat. Commun.}, 7:13540, 2016.

\bibitem{meli2018arXiv180707031M}
G.~{Meli}, T.~S. {Weber}, and K.~R. {Duffy}.
\newblock {Sample path properties of the average generation of a Bellman-Harris
  process}.
\newblock {\em arXiv 1807.07031}, 2018.

\bibitem{perie14}
L.~Peri{\'e}, P.~D. Hodgkin, S.~H. Naik, T.~N. Schumacher, R.~J. de~Boer, and
  K.~R. Duffy.
\newblock Determining lineage pathways from cellular barcoding experiments.
\newblock {\em Cell Rep.}, 6(4):617--624, 2014.

\bibitem{pittel1984growing}
B.~Pittel.
\newblock On growing random binary trees.
\newblock {\em J. Math. Anal. Appl.}, 103(2):461--480, 1984.

\bibitem{pittel1994note}
B.~Pittel.
\newblock Note on the heights of random recursive trees and random m-ary search
  trees.
\newblock {\em Random Struct. Algor.}, 5(2):337--347, 1994.

\bibitem{resnick2013adventures}
S.~I. Resnick.
\newblock {\em Adventures in stochastic processes}.
\newblock Springer Science \& Business Media, 2013.

\bibitem{robson1979height}
J.~M. Robson.
\newblock The height of binary search trees.
\newblock {\em Aust. Comput. J.}, 11(4):151--153, 1979.

\bibitem{samuels1971distribution}
M.~L. Samuels.
\newblock Distribution of the branching-process population among generations.
\newblock {\em J. Appl. Prob.}, 8(04):655--667, 1971.

\bibitem{weber2016inferring}
T.~S. Weber, L.~Peri{\'e}, and K.~R. Duffy.
\newblock Inferring average generation via division-linked labeling.
\newblock {\em J. Math. Bio.}, 73(2):491--523, 2016.

\end{thebibliography}
\end{document}